\providecommand{\U}[1]{\protect\rule{.1in}{.1in}}
\newtheorem{theorem}{Theorem}
\newtheorem{definition}[theorem]{Definition}
\newtheorem{example}[theorem]{Example}
\newtheorem{lemma}[theorem]{Lemma}
\newtheorem{problem}[theorem]{Problem}
\begin{document}

\title{Almost Bijective Parametrization of \\ 
			Copositive Univariate Polynomials }
\author{Hoon Hong and Ezra Nance}
\maketitle

\begin{abstract}
In this work we develop a novel recursive method for parametrizing the cone of copositive univariate polynomials of any arbitrary degree $d$. This parametrization is surjective, almost injective, and has the easily described domain of $(\mathbb{R}_{\geq 0})^d$.

\end{abstract}

\section{Introduction \& Background}

A polynomial whose value remains positive for all inputs is referred to as
\textit{positive}. Given that this is a useful property for a polynomial to
have, these objects have been the subject of much study over the years
[References]. In this paper we will loosen the requirement that a polynomial
must always be positive, and instead we will consider polynomials whose value
is positive for all positive inputs. Polynomials which have this property are
called \textit{copositive}, and are often desirable for applications where
negative values for inputs and outputs don't make sense \cite{chakrabortty2014copositive}, \cite{chen2018copositive}, \cite{kannike2012vacuum}.

Because we can express the set of copositive polynomials using a finite number
quantifiers and inequalities, we know via Tarski \cite{tarski1951decision} that this is a
semialgebraic set. A \textit{semialgebraic} set being a subset of
$\mathbb{R}^{n}$ which satisfies a finite number of inequalities. Indeed,
utilizing the Sturm sequence we could explicitly find a list of inequalities
which the coefficients of a polynomial must satisfy in order to be copositive.
However, this yields an implicit description of the set. In other words, using
these inequalities we can easily check whether a particular polynomial is
copositive or not, but we cannot use this description to generate the set of
all copositive polynomials. This is our primary focus.

We wish to parameterize the set of copositive univariate polynomials. In \cite{gonzalez1996parametrization} it was shown that using the Cylindrical Algebraic
Decomposition it is possible obtain a semialgebraic, bijective
parametrization of any semialgebraic set. However, this was a very general
result, and in our particular case that approach would be very difficult to
implement. As a result we will take a different approach to solve this problem.


\section{Problem}

Parametrization consists of two main components, the domain and the map, and often the complexity of these two components are inversely related. If we start with a very complicated domain, we could parametrize a complicated set with a relatively simple map and vice versa. In the extreme case one can bijectively parametrize any set by simply starting with the set itself as the domain and using the identity map. I hope everyone can agree that this is not a very satisfying or useful parametrization. A ``good" parametrization tends to balance these components. With that in mind we will formally introduce the main property discussed in this paper.

\begin{definition}
	[Copositive polynomial]We say that $f\in\mathbb{R}[x]$ is \emph{copositive},
	and write $f\succeq0$, if
	\[
	\underset{x\geq0}{\forall}\ f\left(  x\right)  \geq0.
	\]
\end{definition}

While copositive polynomials do form a closed convex cone, it is infinite dimensional. So we will consider polynomials of a particular degree. Furthermore, we will only consider monic polynomials. This is illustrated in the following definitions.

\begin{definition}
	[Monic Polynomial Set] The set of monic polynomials of degree $d$ will be written as
	\[F_d = \{f \in \mathbb{R}[x] : deg(f) = d \text{ and } \operatorname{lc}(f) = 1\}. \]
\end{definition}

\begin{definition}
	[Copositive Set]The $d$-\emph{copositive set}, written as $\mathcal{C}_{d}$,
	is defined to be the set of all monic copositive polynomials of degree $d,$
	that is,%
	\[
	\mathcal{C}_{d}=\left\{  f\in F_{d}:\ f\succeq 0 \right\}  .
	\]
\end{definition}
\noindent Here we can visualize a the small dimensional space of $\mathcal{C}_2$.

\begin{example}
[Visualizing $\mathcal{C}_2$] $\mathcal{C}_2 = \{x^2 + a_1x + a_0 : x^2 + a_1x + a_0 \succeq 0 \}$

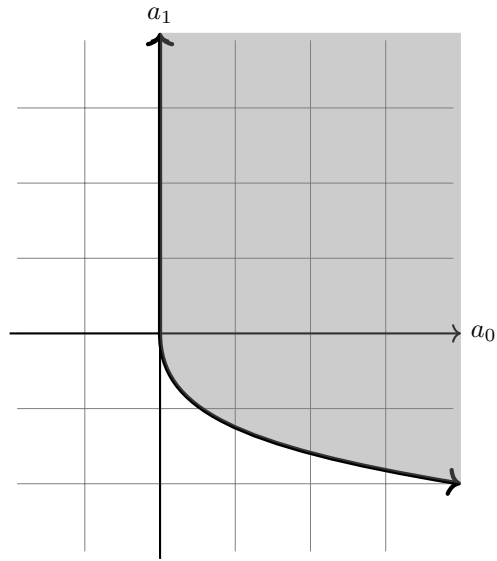
\begin{figure}[h!]
\begin{center}
	\begin{tikzpicture}[domain=0:2]
		\draw[very thin,color=gray] (-1.9,-2.9) grid (3.9,3.9);
		
		\draw[thick,->] (-2,0) -- (4,0) node[right] {$a_0$};
		\draw[thick,->] (0,-3) -- (0,4) node[above] {$a_1$};
		
		\draw[ultra thick,->]    (0,0) .. controls (0,-1) and (1,-1.5).. (4,-2);
		\draw[ultra thick,->]    (0,0) -- (0,4);
		
		\draw [fill=gray,fill opacity=0.4, draw=none] (0,4) -- (0,0).. controls (0,-1) and (1,-1.5).. (4,-2) -- (4,4) -- cycle;
	\end{tikzpicture}
\end{center}
\caption{The set $\mathcal{C}_2$}
\label{fig:C2}
\end{figure}
\end{example}

It's not difficult to show that for each $d \geq 0$, $\mathcal{C}_d$ is a closed convex cone. We will parametrize each of these sets. For the domain we will choose the simplest closed convex cone we can think of, that is, $\mathbb{R}^d_{\geq 0}$. This takes care of the domain. The other main component of a parametrization is the map itself. For this we are going to demand that the map be \emph{polynomial} and \emph{generically bijective}. We can now describe the problem statement.

\begin{problem}
[Main Problem] Given $d \geq 0$, find some polynomial map $\Phi_{d}: \mathbb{R}^d_{\geq 0} \to \mathcal{C}_d$ which is generically bijective.
\end{problem}

At this point it's not clear that such a map should even exist, let alone be found. The next section will step through the logic of constructing such a map.


\section{Main Results}

We will begin by doing a bit of exploration on the small example of $\mathcal{C}_2$. From Figure \ref{fig:C2} we can see that the boundary of $\mathcal{C}_2$ seems to be made of two pieces: one being a vertical ray, and one being the curved ``bottom" piece. Interestingly, if we vertically extend this curved bottom boundary, we can recover all of $\mathcal{C}_2$. So now a natural question arises. Can we characterize this curved boundary? 

Luckily, $\mathcal{C}_2$ is a small enough example that we can basically answer this question by inspection. It turns out that a polynomial $f \in \mathcal{C}_2$ is on this curved boundary if and only if there exists some $x \geq 0$ such that $f(x) = f'(x) = 0$. This gives us an idea for how to form a general approach to the main problem. First we will parametrize the polynomials in $\mathcal{C}_d$ with a non-negative zero which is also a zero of the derivative. Then we will extend these polynomials in some way to recover all of $\mathcal{C}_d$.

With this in mind we will now state the main theorem in this paper. 

\begin{theorem}[Main Result] 
\label{thm:main}
The map $\Phi_{d}$ is a generically bijective parametrization of the
$d$-\emph{copositive set} $\mathcal{C}_{d}$. 
\[
\phi_{d}:%
\begin{array}
[c]{ccc}%
(\mathbb{R}_{\geq0})^{d} & \longrightarrow & \mathbb{R}\left[  x\right] \\
\mathbf{t}_{d} & \longmapsto & f
\end{array}
\]
where $\mathbf{t}_{i}\ $is a short-hand notation for $\left(  t_{0}%
,\ldots,t_{i-1}\right) \in (\mathbb{R}_{\geq 0})^d$ and
\[
f=\Phi_{d}(\mathbf{t}_{d})=\left\{
\begin{array}
[c]{lll}%
1 & \text{if} & d=0\\
x+t_{0} & \text{if} & d=1\\
\left(  x-t_{d-2}\right)  ^{2}\ \Phi_{d-2}(\mathbf{t}_{d-2})+t_{d-1}x &
\text{if} & d\geq2
\end{array}.
\right.
\]

\end{theorem}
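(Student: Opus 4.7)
The plan is to proceed by strong induction on $d$, exploiting the recursive structure of $\Phi_d$. The base cases $d = 0$ and $d = 1$ are immediate, so all of the content lies in the inductive step for $d \geq 2$, which I would organise into three separate claims: (i) $\Phi_d$ maps $(\mathbb{R}_{\geq 0})^d$ into $\mathcal{C}_d$, (ii) $\Phi_d$ is surjective onto $\mathcal{C}_d$, and (iii) $\Phi_d$ is injective outside a proper algebraic subset of its domain. Claim (i) is a direct verification from the recursion $\Phi_d(\mathbf{t}_d) = (x - t_{d-2})^2\, \Phi_{d-2}(\mathbf{t}_{d-2}) + t_{d-1} x$: the polynomial is monic of degree $d$, and on $[0,\infty)$ it is a sum of two nonnegative summands, using the inductive hypothesis on $\Phi_{d-2}$ together with $t_{d-1} \geq 0$ and $(x-t_{d-2})^2 \geq 0$.

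Surjectivity is the heart of the argument. Given $f \in \mathcal{C}_d$ I would recover the two new parameters via the one-dimensional optimisation
\[
t_{d-1} \;=\; \min_{x \geq 0}\, \frac{f(x)}{x}, \qquad t_{d-2} \;=\; \text{a point where this minimum is attained},
\]
where $f(x)/x$ is extended continuously at the origin to $f'(0)$ when $f(0) = 0$ and to $+\infty$ otherwise. Because $f$ is copositive this infimum is nonnegative, and because $f(x)/x \to +\infty$ as $x \to \infty$ it is attained at some $t_{d-2} \in [0,\infty)$. The first-order condition at this minimiser, together with the definition of $t_{d-1}$, forces $g(x) := f(x) - t_{d-1} x$ to vanish along with $g'(x)$ at $x = t_{d-2}$ and to be nonnegative on $[0,\infty)$; hence $g = (x - t_{d-2})^2 h(x)$ with $h$ monic of degree $d-2$, and examining signs away from $t_{d-2}$ shows $h \in \mathcal{C}_{d-2}$. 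The inductive hypothesis supplies $\mathbf{t}_{d-2}$ with $\Phi_{d-2}(\mathbf{t}_{d-2}) = h$, and concatenating parameters yields $\Phi_d(\mathbf{t}_d) = f$.

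For generic injectivity I observe that the recovery procedure in the previous paragraph is uniquely determined whenever $f(x)/x$ has a unique minimiser lying in the open interval $(0,\infty)$: $t_{d-1}$ is the minimum value and is therefore always unique, while $t_{d-2}$ is the unique argmin. Non-uniqueness of the minimiser, or its degeneration to the boundary point $0$, cuts out a semialgebraic subset of $\mathcal{C}_d$ of strictly smaller dimension, whose preimage lies in a proper algebraic subset of the domain; composed with the inductive generic injectivity of $\Phi_{d-2}$ on the $\mathbf{t}_{d-2}$-block this gives generic injectivity of $\Phi_d$. The main obstacle I anticipate is making these dimension-drop claims precise — in particular, exhibiting an explicit nonzero polynomial in the $t_i$ whose vanishing carves out the exceptional locus — and cleanly handling the boundary cases $t_{d-2} = 0$ or $f(0) = 0$, where the continuous extension of $f(x)/x$ at the origin must replace the usual quotient-rule stationary condition.
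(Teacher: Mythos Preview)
Your proposal is correct and follows essentially the same route as the paper: the paper packages the inductive step through an intermediate ``base boundary'' set $\mathcal{B}_d=\{f\in\mathcal{C}_d:\exists\,x\ge 0,\ f(x)=f'(x)=0\}$ and two auxiliary maps $\psi_d(h,t)=(x-t)^2h$ and $\phi_d(g,t)=g+tx$, but the substance of its surjectivity and (generic) injectivity arguments is exactly your optimisation $t_{d-1}=\inf_{x>0}f(x)/x$ with $t_{d-2}$ an argmin, together with the observation that non-uniqueness of the argmin forces the quotient $h$ to lie in the lower-dimensional set $\mathcal{B}_{d-2}$. Your anticipated obstacles (the boundary case $t_{d-2}=0$ and making the measure-zero claim precise) are precisely the points the paper handles via separate lemmas.
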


\begin{example}
Here we will parametrize $\mathcal{C}_3$.
\begin{align*}
	\Phi_3(t_0,t_1,t_2) &= (x - t_1)^2 \Phi_1(t_0) + t_2x \\
	&= (x^2-2t_1x+t_1^2)(x+t_0) + t_2x \\
	&= x^3 + (t_0 - 2t_1)x^2 + (t_1^2 - 2t_0t_1 + t_2)x + (t_0t_1^2)
\end{align*}
So \[\mathcal{C}_3 = \{x^3 + (t_0 - 2t_1)x^2 + (t_1^2 - 2t_0t_1 + t_2)x + (t_0t_1^2) : t_0,t_1,t_2 \geq 0 \}.\]
\end{example}

As previously stated, the parametrization in Theorem \ref{thm:main} consists of two main steps:
\begin{enumerate}
	\item Parametrize the set $\left\{f \in \mathcal{C}_d : \underset{x \geq 0}{\exists} f(x)=f'(x)=0\right\}$.
	\item Extend the set from step 1 to recover all of $\mathcal{C}_d$.
\end{enumerate}
Using step 1 as motivation we will introduce the following definition.
\begin{definition}[Base Boundary]
	The \emph{base boundary} of the $d$-copositive set, $\mathcal{C}_d$, is defined to be
	\[
		\mathcal{B}_d = \left\{f \in \mathcal{C}_d : \underset{x \geq 0}{\exists}f(x) = f'(x) = 0\right\}.
	\]
\end{definition}
\noindent We will now step through the details of these two steps in the next sections, and we will prove Theorem \ref{thm:main} in section \ref{sec:combo}.

\section{Parametrizing the Base Boundary}
\label{sec:BB}
In this section it will be helpful to have a running example polynomial to work with. So, consider $f \in \mathcal{C}_d$. In terms of its complex roots we have
\[
	f = (x-z_1)(x-z_2)\cdots(x-z_d).
\]
Now if we assume $f$ is in the base boundary, then there is some $t \geq 0$ for which $f(t)=f'(t) = 0$. In this case $f$ takes the form
\[
	f = (x-z_1)(x-z_2)\cdots(x-z_{d-2})(x-t)^2.
\]
Now consider the polynomial
\[
	g = \frac{f}{(x-t)^2} = (x-z_1)(x-z_2)\cdots(x-z_{d-2}).
\]
Note that $g$ must be an element of $\mathcal{C}_{d-2}$. 

Now if we will use this idea to build $\mathcal{B}_d$ from $\mathcal{C}_{d-2}$. To that end we can consider the following map. Let $d \geq 2$ and define $\psi_d: \mathcal{C}_{d-2} \times \mathbb{R}_{\geq 0} \to \mathcal{B}_d$ such that
\[
\psi_d(f,t)(x) = f(x) \cdot (x-t)^2.
\]
We will now check the properties of this map.

\begin{lemma}[Base Boundary Surjective]
\label{lem:BaseBoundarySurjective}
Let $d \geq 2$. Then $\psi_d:\mathcal{C}_{d-2} \times \mathbb{R}_{\geq 0} \to \mathcal{B}_d$ is surjective.
\end{lemma}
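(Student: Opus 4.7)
The plan is to take an arbitrary $h \in \mathcal{B}_d$ and exhibit an explicit preimage under $\psi_d$. By the definition of $\mathcal{B}_d$, there exists some $t \geq 0$ with $h(t) = h'(t) = 0$, which forces $(x-t)^2$ to divide $h$ in $\mathbb{R}[x]$. Polynomial division then yields a unique $g \in \mathbb{R}[x]$ with $h(x) = g(x)(x-t)^2$, and I would propose the pair $(g, t)$ as the preimage. By construction, $\psi_d(g,t) = h$, so the entire proof reduces to verifying $(g,t) \in \mathcal{C}_{d-2} \times \mathbb{R}_{\geq 0}$.

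The condition $t \geq 0$ comes for free from the witness supplied by membership in $\mathcal{B}_d$, so the real content is showing $g \in \mathcal{C}_{d-2}$. Three properties must be checked: $g$ has degree $d-2$, $g$ is monic, and $g$ is copositive. The degree and leading-coefficient statements are immediate bookkeeping from the factorization $h = g \cdot (x-t)^2$, since $h \in \mathcal{C}_d \subseteq F_d$ is monic of degree $d$ and $(x-t)^2$ is monic of degree $2$. The copositivity of $g$ is the only step requiring genuine argument.

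For copositivity, I would split on whether the test point equals $t$. For any $x \geq 0$ with $x \neq t$, the identity $g(x) = h(x)/(x-t)^2$ combined with $(x-t)^2 > 0$ and $h(x) \geq 0$ gives $g(x) \geq 0$ directly. The remaining point $x = t$ is handled by continuity of the polynomial $g$: values $x \geq 0$ with $x \neq t$ accumulate at $t$, so $g(t)$ is a limit of non-negative numbers and is therefore non-negative. This continuity step is the main (and only) obstacle I anticipate; the rest is mechanical. A degenerate case worth noting is $d = 2$, in which $g$ is a degree-$0$ monic polynomial, necessarily the constant $1 \in \mathcal{C}_0$, so the argument goes through uniformly.
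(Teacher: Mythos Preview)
Your proposal is correct and follows essentially the same approach as the paper: factor out $(x-t)^2$ from the given element of $\mathcal{B}_d$, then verify the quotient lies in $\mathcal{C}_{d-2}$ by using $g(x) = h(x)/(x-t)^2 \geq 0$ for $x \neq t$ and invoking continuity at $x = t$. The paper phrases the continuity step as a contradiction via an $\varepsilon$-perturbation rather than a direct limit, but this is only a cosmetic difference.
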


\begin{proof}
	Let $g \in \mathcal{B}_d$. Then by definition of the base boundary $g$ takes the form
	\[
		g = (x-z_1)(x-z_2)\cdots(x-z_{d-2})(x-t)^2
	\]
	for some $t \geq 0$. Now it suffices to show that $f = (x-z_1)\cdots(x-z_{d-2})$ is an element of $\mathcal{C}_{d-2}$. Clearly $f$ is a monic, degree $d-2$ polynomial with real coefficients. By means of contradiction assume that $f(y) < 0$ for some $y \geq 0$. Then
	\begin{align*}
		g(y) &= f(y)(y-t)^2 \\
		&\leq 0.
	\end{align*}
	If $(y-t)^2 > 0$, then we have our contradiction. So now assume that $(y-t)^2 = 0$. In other words, $t = y$. Since $f$ is continuous there exists some $\varepsilon > 0$ such that $f(y+\varepsilon) < 0$. Then 
	\begin{align*}
		g(y+\varepsilon) &= f(y+\varepsilon)(y+\varepsilon-t)^2 \\
		&= f(y+\varepsilon)\varepsilon^2 \\
		&< 0.
	\end{align*}
	Again, we arrive at a contradiction.
\end{proof}

\begin{lemma}[Base Boundary Almost Injective]
	\label{lem:BaseBoundaryInjective}
	Let $d \geq 2$. Then $\psi_d:\mathcal{C}_{d-2} \times \mathbb{R}_{\geq 0} \to \mathcal{B}_d$ is almost injective.
\end{lemma}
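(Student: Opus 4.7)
The plan is to exploit unique factorization in $\mathbb{R}[x]$ together with a dimension count. Suppose $\psi_d(f_1,t_1)=\psi_d(f_2,t_2)$, i.e.
\[
f_1(x)(x-t_1)^2=f_2(x)(x-t_2)^2.
\]
If $t_1=t_2$, I can cancel the nonzero factor $(x-t_1)^2$ in the integral domain $\mathbb{R}[x]$ and conclude $f_1=f_2$, so the preimage is a singleton. Hence collisions can only occur when $t_1\neq t_2$.

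In that case $\gcd\!\left((x-t_1)^2,(x-t_2)^2\right)=1$ in $\mathbb{R}[x]$, so $(x-t_1)^2\mid f_2$ and $(x-t_2)^2\mid f_1$. Writing $f_1=(x-t_2)^2 h$ and substituting back yields $f_2=(x-t_1)^2 h$ with the same quotient $h\in\mathbb{R}[x]$. Reapplying the continuity argument from Lemma \ref{lem:BaseBoundarySurjective} to $f_1$ shows $h\in\mathcal{C}_{d-4}$, which in particular requires $d\geq 4$; the small cases $d=2,3$ are therefore strictly injective by the degree obstruction.

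Consequently, the locus in $\mathcal{C}_{d-2}\times\mathbb{R}_{\geq 0}$ on which $\psi_d$ can fail to be injective is contained in
\[
B=\{(f,t)\in\mathcal{C}_{d-2}\times\mathbb{R}_{\geq 0} : \exists\, t'\geq 0,\ t'\neq t,\ (x-t')^2\mid f(x)\},
\]
and $B$ is the image of the polynomial map $(h,t,t')\mapsto\bigl((x-t')^2 h,\,t\bigr)$ defined on $\mathcal{C}_{d-4}\times\mathbb{R}_{\geq 0}^2$. Thus $\dim B\leq (d-4)+2=d-2$, while $\dim(\mathcal{C}_{d-2}\times\mathbb{R}_{\geq 0})=d-1$. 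So $B$ sits inside a proper semialgebraic subset of strictly smaller dimension, and $\psi_d$ is injective off $B$.

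The main obstacle is less the algebra — which reduces to a one-line application of unique factorization — than pinning down the precise meaning of \emph{almost injective} used in the paper, and confirming that $B$ lies within the ``negligible'' set required by that definition. Once \emph{almost injective} is fixed to mean ``injective off a set of strictly smaller dimension'' (equivalently, off a proper Zariski-closed subset after clearing the sign constraints), the dimension count above closes the argument and the inductive description of $B$ via $\mathcal{C}_{d-4}$ meshes naturally with the recursive nature of $\Phi_d$.
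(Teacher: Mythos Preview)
Your argument is correct and mirrors the paper's proof: both reduce to unique factorization in $\mathbb{R}[x]$ to show that a collision with $t_1\neq t_2$ forces $(x-t_2)^2\mid f_1$, placing the offending pair $(f_1,t_1)$ in a negligible exceptional set. The paper names this set $\mathcal{B}_{d-2}\times\mathbb{R}_{\geq 0}$ and asserts it is measure zero because $\mathcal{B}_{d-2}$ lies in the topological boundary of $\mathcal{C}_{d-2}$, whereas you obtain the same conclusion by an explicit dimension count through the parametrization $(h,t,t')\mapsto((x-t')^2 h,\,t)$ from $\mathcal{C}_{d-4}\times\mathbb{R}_{\geq 0}^2$; for semialgebraic sets these two negligibility arguments are equivalent, and your version is arguably more self-contained.
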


\begin{proof}
	Note that $\mathcal{B}_{d-2}$ is part of the boundary of $\mathcal{C}_{d-2}$, and thus it is a measure zero subset. This also means that $\mathcal{B}_{d-2} \times \mathbb{R}_{\geq 0} \subset \mathcal{C}_{d-2} \times \mathbb{R}_{\geq 0}$ is a measure zero subset. We will avoid this subset. By means of contradiction let $f,g \in \mathcal{C}_{d-2} \setminus \mathcal{B}_{d-2}$ and $t,s \in \mathbb{R}_{\geq 0}$ such that $f \neq g$ and $\psi_d(f,t) = \psi_d(g,s)$. 
	
	One case is when $t=s$. If this is true, then we have $f(x)\cdot(x-t)^2 = g(x) \cdot (x-s)^2$, which implies $f(x) = g(x)$. This contradicts our assumption. Now we can assume that $t \neq s$. From the fundamental theorem of algebra we know that $\psi_d(f,t)$ and $\psi_d(g,s)$ have the same set of complex roots. Using this along with $t \neq s$ we have
	\[
		\psi_d(f,t) = \psi_d(g,s) = (x-z_1)\cdots(x-z_{d-4})(x-t)^2(x-s)^2.
	\]
	Hence, $g = (x-z_1)\cdots(x-z_{d-4})(x-t)^2$. However, this implies that $g \in \mathcal{B}_{n-2}$ since $g(t) = g'(t) = 0$. Again, this contradicts our assumption.
\end{proof}

Note that we need to go up to $d = 4$ to begin to see injectivity failing. The main issue algebraically is that when $d \geq 4$ a polynomial can have two different critical points which are also non-negative roots. Luckily, as long as the original polynomial doesn't have any roots with multiplicity 2, this issue disappears. 
\begin{example}[Injectivity Failing]
	\[
		\psi_4\left((x-1)^2,2\right) = (x-1)^2(x-2)^2 = \psi_4\left((x-2)^2,1 \right)
	\] 
\end{example}
\section{Extending the Base Boundary}
\label{sec:Ext}
In the case of $\mathcal{C}_2$ we had very limited options for directions to extend $\mathcal{B}_2$. By looking at Figure \ref{fig:C2}, we see that the only direction that works is extending $\mathcal{B}_2$ in the direction of the linear term. However, once the degree $d$ grows we start to get many more directions we could go. If we were to take naive guesses at the direction to extend, I believe there are two very natural choices given $\mathcal{C}_2$. We could view the direction we extended $\mathcal{B}_2$ to get $\mathcal{C}_2$ as either the direction of the linear term or as the direction of the term which is one degree less than $d$. Each of these give us a general approach for any $d$.

We believe that always using the linear term to extend will yield the simplest parametrization. So that is what we will use. However, there is freedom here to possibly choose a different direction. This idea suggests that we consider the map $\phi_d: \mathcal{B}_d \times \mathbb{R}_{\geq 0} \to \mathcal{C}_d$ defined by
\[
	\phi_d(f,t)(x) = f(x) + tx.
\]
Just as before, we will now check the properties of this map.

\begin{lemma}[Extension Surjective]
\label{lem:ExtendSurjective}
Let $d \geq 2$. Then $\phi_d: \mathcal{B}_d \times \mathbb{R}_{\geq 0} \to \mathcal{C}_d$ is surjective.
\end{lemma}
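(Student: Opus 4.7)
The plan is, given $h \in \mathcal{C}_d$, to produce $t^* \geq 0$ such that $f := h - t^* x$ lies in $\mathcal{B}_d$; this gives $\phi_d(f, t^*) = h$ and establishes surjectivity. The natural choice is the largest $t \geq 0$ for which $h(x) - tx$ remains nonnegative on $[0, \infty)$, namely
\[
t^* := \inf_{x > 0} \frac{h(x)}{x}.
\]
I would first verify that $t^*$ is a finite nonnegative number attained at some $x^* \geq 0$. Since $h$ is copositive, $h(x)/x \geq 0$ for all $x > 0$, so $t^* \geq 0$. Because $h$ is monic of degree $d \geq 2$, we have $h(x)/x \to \infty$ as $x \to \infty$. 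As $x \to 0^+$, either $h(0) > 0$ and $h(x)/x \to \infty$, or $h(0) = 0$, in which case $h$ is divisible by $x$ and $h(x)/x$ extends continuously to $x = 0$ with value $h'(0) \geq 0$. In either case the infimum is attained at some $x^* \geq 0$.

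Next I set $f := h - t^* x$. Since $d \geq 2$, $f$ is still monic of degree $d$, and by definition of $t^*$, $f(x) = h(x) - t^* x \geq 0$ for every $x \geq 0$ (with $f(0) = h(0) \geq 0$ taking care of the endpoint), so $f \in \mathcal{C}_d$. To show $f \in \mathcal{B}_d$ it suffices to exhibit a common zero of $f$ and $f'$ in $[0,\infty)$, and I claim $x^*$ works. If $x^* > 0$, then the interior minimality of $x^*$ for $h(x)/x$ combined with the quotient rule forces $x^* h'(x^*) - h(x^*) = 0$, so $h'(x^*) = h(x^*)/x^* = t^*$; hence $f(x^*) = h(x^*) - t^* x^* = 0$ and $f'(x^*) = h'(x^*) - t^* = 0$. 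If $x^* = 0$, the preceding analysis shows $h(0) = 0$ and $t^* = h'(0)$, so $f(0) = 0$ and $f'(0) = h'(0) - t^* = 0$. In either case $x^*$ is a double root of $f$ in $[0,\infty)$, so $f \in \mathcal{B}_d$.

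The main subtleties are confirming that the infimum is attained (handled by examining $h(x)/x$ at both endpoints $0^+$ and $\infty$) and correctly handling the boundary case $x^* = 0$, which only arises when $h(0) = 0$. Neither is difficult once the continuous extension of $h(x)/x$ to $x=0$ (when applicable) is properly accounted for; the rest of the proof is the routine first-order optimality computation sketched above.
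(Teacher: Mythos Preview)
Your proof is correct and follows essentially the same approach as the paper: define $t^*=\inf_{x>0} h(x)/x$, verify that $h-t^*x$ is still copositive, and use first-order optimality at the minimizer (or the limiting behavior at $x=0$ when $h(0)=0$) to produce the required double nonnegative root. The only organizational difference is that you first argue the infimum is attained on $[0,\infty)$ via the continuous extension of $h(x)/x$ and then read off the double root, whereas the paper separates the argument into ``infimum is a minimum on $(0,\infty)$'' versus ``infimum is the limit at $0$''; the substance is identical.
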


\begin{proof}
	Let $g \in \mathcal{C}_d$. Note that $\phi_d$ is surjective if we can find a $t \in \mathbb{R}_{\geq 0}$ such that $g(x) - tx \in \mathcal{B}_d$. To that end let
	\[
		t = \inf\left\{\frac{g(x)}{x} : x > 0\right\}.
	\]
	Now we must show that $g(x) - tx \in \mathcal{C}_d$ and for some $y \geq 0$ we have $g(y) - ty = g'(y) - t = 0$. Both of these together would give us that $g(x) - tx \in \mathcal{B}_d$.
	
	For the first claim assume that $g(x) - tx \notin \mathcal{C}_d$. Thus, there exists some $y \geq 0$ such that $g(y) - ty < 0$. If $y = 0$, then $g(0) < 0$. This contradicts $g \in \mathcal{C}_d$. So we can assume that $y > 0$. Then by rearranging $g(y) - ty < 0$ we have
	\[
		t > \frac{g(y)}{y}.
	\]
	This contradicts $t$ being the infimum of the set above. So we can conclude that $g(x) - tx \in \mathcal{C}_d$.
	
	The second claim needs to be split into two cases. For the first case assume that $t \in \left\{\frac{g(x)}{x} : x>0 \right\}$. In other words, $t = \min\left\{\frac{g(x)}{x} : x > 0\right\}$. In this case there exists a $y > 0$ such that $t = \frac{g(y)}{y}$. Rearranging this equation gives us $g(y)-ty = 0$. Also, since $y$ minimizes the function $\frac{g(x)}{x}$ over an open set, we must have that $y$ causes the derivative of $\frac{g(x)}{x}$ to vanish. To be explicit,
	\[
		\frac{yg'(y) - g(y)}{y^2} = 0.
	\]
	If we rearrange this equation we get $g'(y) = \frac{g(y)}{y} = t$. In other words, $g'(y) - t = 0$.
	
	Now we must consider the case when the set $\left\{\frac{g(x)}{x} : x > 0\right\}$ has no minimum. In this case we have either
	\[
		t = \lim\limits_{x \to \infty}\frac{g(x)}{x}
	\]
	or
	\[
		t = \lim\limits_{x \to 0}\frac{g(x)}{x}.
	\]
	Since the degree of $g$ is at least 2 the limit in the first option doesn't exist. Thus,
	\[
		t = \lim\limits_{x \to 0}\frac{g(x)}{x}.
	\]
	The only way for this limit to exist is if $g$ factors as $g(x) = x\cdot h(x)$ for some $h\in \mathcal{C}_{d-1}$. Then we have $g(0) - t\cdot 0 = 0\cdot h(0) - t\cdot 0 = 0$. Also, by applying L'H\^opital's rule we have
	\[
		t = \lim\limits_{x \to 0}\frac{g(x)}{x} = \lim\limits_{x \to 0} \left(x \cdot h(x)\right)'
		= \lim\limits_{x \to 0} \left(h(x) + x\cdot h'(x)\right) = h(0).
	\]
	Using this we have $g'(0) - t = h(0) - h(0) = 0$. Thus, we can conclude that $g(x) - tx \in \mathcal{B}_d$.
\end{proof}

\begin{lemma}[Extension Injective]
	\label{lem:ExtendInjective}
	Let $d \geq 2$. Then $\phi_d: \mathcal{B}_d \times \mathbb{R}_{\geq 0} \to \mathcal{C}_d$ is injective.
\end{lemma}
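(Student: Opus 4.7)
The plan is to show that whenever $\phi_d(f_1,t_1) = \phi_d(f_2,t_2)$, both $t_i$ must equal a single quantity determined by the common image, so $t_1=t_2$ and hence $f_1=f_2$. Specifically, write $g = \phi_d(f_i,t_i) = f_i(x) + t_i x \in \mathcal{C}_d$ and I will show
\[
t_1 \;=\; \inf_{x>0}\frac{g(x)}{x} \;=\; t_2.
\]

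First I would extract the two pieces of information from $f_i \in \mathcal{B}_d$. Copositivity of $f_i = g - t_i x$ gives $g(x) \geq t_i x$ for all $x \geq 0$, hence $t_i \leq g(x)/x$ for every $x > 0$, so $t_i \leq \inf_{x>0} g(x)/x$. The base-boundary condition gives some $y_i \geq 0$ with $f_i(y_i) = f_i'(y_i) = 0$, i.e.\ $g(y_i) = t_i y_i$ and $g'(y_i) = t_i$.

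Next I would split on $y_i > 0$ vs.\ $y_i = 0$ to obtain the matching upper bound. If $y_i > 0$, then $t_i = g(y_i)/y_i$ is an element of $\{g(x)/x : x > 0\}$, so $t_i \geq \inf_{x>0} g(x)/x$. If $y_i = 0$, then $g(0) = 0$ and $t_i = g'(0)$; since $g(0)=0$ we may factor $g(x)= x h(x)$ (with $h \in F_{d-1}$), and then $g(x)/x = h(x) \to h(0) = g'(0) = t_i$ as $x \to 0^+$, which again yields $t_i \geq \inf_{x>0} g(x)/x$ as a limit point. Combining with the reverse inequality, $t_i = \inf_{x>0} g(x)/x$ in either case.

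Since the right-hand side depends only on $g$, I conclude $t_1 = t_2$, and then $f_1 = g - t_1 x = g - t_2 x = f_2$, establishing injectivity. The main subtlety to handle carefully is the $y_i = 0$ case: I must justify that $g$ has a root at the origin (otherwise $f_i(0) = g(0) > 0$, contradicting $y_i = 0$ being a double root of $f_i$), and then verify the limit computation so that $t_i$ is genuinely the infimum rather than just a lower bound. No other steps are difficult; the argument is essentially the observation that $y \mapsto tx$ is the unique support line through the origin to the graph of $g$ on $[0,\infty)$ whose slope is witnessed by a tangency.
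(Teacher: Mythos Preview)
Your proof is correct and complete, including the $y_i=0$ case: from $f_i(0)=0$ you get $g(0)=0$, factor $g(x)=xh(x)$, and then $g(x)/x\to h(0)=g'(0)=t_i$, which together with the lower bound forces $t_i=\inf_{x>0}g(x)/x$.

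Your route differs from the paper's. The paper argues by contradiction: assuming $t>s$, it takes the base-boundary witness $y\geq 0$ for $g$, plugs it into $f(x)-g(x)+(t-s)x=0$ to force $y=0$ and $f(0)=0$, then differentiates to obtain $f'(0)=-(t-s)<0$, contradicting copositivity of $f$. Your argument instead exhibits an explicit left inverse on the $t$-coordinate, namely $t=\inf_{x>0}g(x)/x$, which is exactly the formula the paper uses in the \emph{surjectivity} lemma. This buys you a cleaner conceptual picture (the ``supporting line through the origin'' you mention) and unifies the two lemmas; the paper's argument is a bit shorter and avoids the case split on $y_i$, but does not make the inverse explicit.
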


\begin{proof}
	Let $f,g \in \mathcal{B}_d$ and $t,s \in \mathbb{R}_{\geq 0}$ such that $\phi_d(f,t) = \phi_d(g,s)$. Clearly if $t = s$, then we have $f = g$, and the map is injective. So without loss of generality we will assume that $t > s$ and eventually arrive at a contradiction. Note that we have the equation
	\begin{equation}\label{eq:inj1}
		f(x) - g(x) + (t-s)x = 0
	\end{equation}
	which holds for all $x \in \mathbb{R}$. Since $g \in \mathcal{B}_d$ we have some $y \geq 0$ such that $g(y) = g'(y) = 0$. Substituting $y$ into equation (\ref{eq:inj1}) we have
	\[
		f(y) + (t-s)y = 0.
	\]
	Note that both $f(y)$ and $(t-s)y$ are non-negative. Then we can conclude $f(y) = 0$ and $(t-s)y = 0$. By assumption $t-s > 0$, and so $y = 0$. Now if we differentiate equation (\ref{eq:inj1}) and evaluate at $x = y = 0$ we get
	\[
		f'(y) - g'(y) + (t-s) = f'(0) + (t-s) = 0.
	\]
	Thus, $f$ is decreasing at $x=0$. But this along with the fact that $f(0) = 0$ implies that $f$ is not copositive. This contradicts $f$ being an element of $\mathcal{B}_d$.
\end{proof}
\section{Combining the Base Boundary with the Extension}\label{sec:combo}

Now we will combine the previous results to arrive at a recursive way to parametrize $\mathcal{C}_d$. Since the recursion jumps by $2$ every time we will need two different starting points depending on the parity of $d$. When $d$ is even we will start at $\mathcal{C}_0$, which consists of the single polynomial $f(x) = 1$. When $d$ is odd we will begin with $\mathcal{C}_1 = \{x+a_0 : a_0 \geq 0\}$. Regardless, we get the following string of compositions
\[
	\Phi_{2k}(t_0,\dots,t_{2k-1}) = \phi_{2k}(\psi_{2k}(\dots\phi_4(\psi_4(\phi_2(\psi_2(1,t_0),t_1),t_2),t_3),\dots,t_{2k-2}),t_{2k-1}),
\]
or
\[
	\Phi_{2k+1}(t_0,\dots,t_{2k}) = \phi_{2k+1}(\psi_{2k+1}(\dots\phi_5(\psi_5(\phi_3(\psi_3(x,t_0),t_1),t_2),t_3),\dots,t_{2k-1}),t_{2k}).
\]
This idea is the map in Theorem \ref{thm:main}. In words we can describe this map as taking a copositive polynomial, adding a critical point somewhere on the positive $x$-axis, linearly shifting the resulting polynomial, and then repeating this process until the desired degree is achieved. We can see what's happening geometrically in Figure \ref{fig:map}.

\begin{figure}[h!]
	\centering
	\includegraphics[scale=0.75]{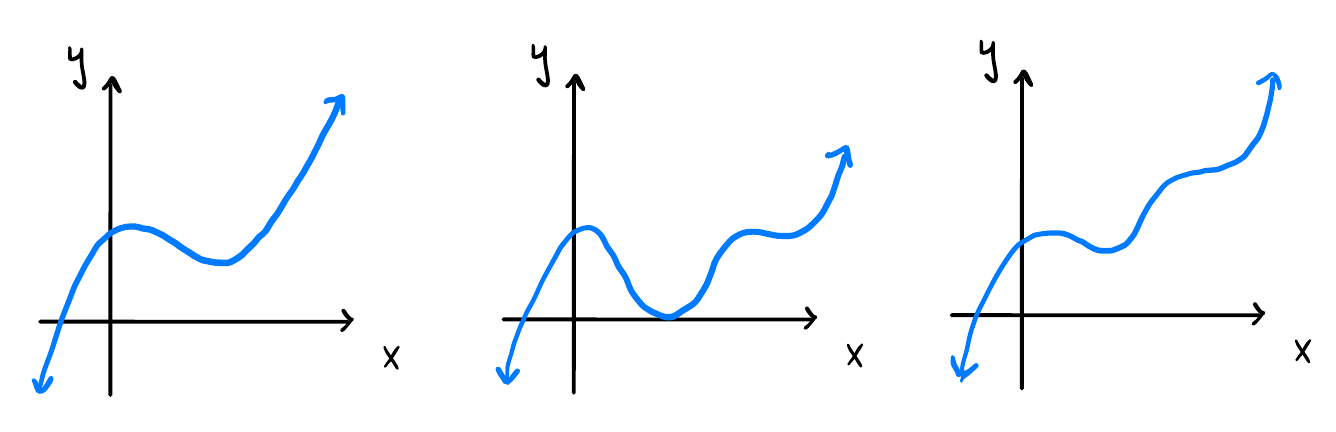}
	\caption{Arbitrary polynomial from $\mathcal{C}_3$ (left), creating critical point with $\psi_5$ (center), linearly shifting the center polynomial with $\phi_5$ (right)}
	\label{fig:map}
\end{figure}

\begin{proof}(Theorem \ref{thm:main})
	We will begin with $\Phi_0$ and $\Phi_1$. Technically speaking $\Phi_0$ is not bijective, but we can say that is if we allow ourselves to claim that the ``empty element" maps to the only element in the codomain. Regardless, this is a small detail, and it doesn't affect the recursion. Clearly $\Phi_1$ is bijective since it is essentially the identity map.
	
	Now let $d \geq 2$ and consider $\Phi_d$. For a proof by induction assume that $\Phi_k$ is surjective and generically injective for all $k < d$. We will now show that $\Phi_d: (\mathbb{R}_{\geq 0})^d \to \mathcal{C}_d$ is surjective and generically injective. Note that using the maps $\psi_d$ and $\phi_d$ defined in sections \ref{sec:BB} and \ref{sec:Ext}, we can write $\Phi_d$ as follows.
	\[
		\Phi_d(t_0,\dots,t_{d-1}) = \phi_d\Big(\ \psi_d\big(\Phi_{d-2}(\mathbf{t}_{d-2}),\ t_{d-2}\big),\ t_{d-1}\Big)
	\]
	Let $f \in \mathcal{C}_d$. For surjectivity we must now find an element $\mathbf{t}_d \in (\mathbb{R}_{\geq 0})^d$ which maps to $f$. From Lemmas \ref{lem:ExtendSurjective} and \ref{lem:ExtendInjective} we know that $\phi_d$ is bijective. Thus, there is a pair $(g,t_{d-1}) \in \mathcal{B}_d \times \mathbb{R}_{\geq 0}$ such that $\phi_d(g,t_{d-1}) = f$. From Lemma $\ref{lem:BaseBoundarySurjective}$ we have that $\psi_d$ is surjective. Thus, there exists a pair $(h,t_{d-2}) \in \mathcal{C}_{d-2} \times \mathbb{R}_{\geq 0}$ such that $\psi_d(h,t_{d-2}) = g$. Hence, $f = \phi_d\left( \psi_d(h,t_{d-2}),t_{d-1}\right)$. We have $(t_0,\dots,t_{d-3}) \in (\mathbb{R}_{\geq 0})^{d-2}$ such that $\Phi_{d-2}(t_0,\dots,t_{d-3}) = h$ by the induction hypothesis. This gives us surjectivity.
	
	For generic injectivity assume that $\Phi_d(\mathbf{t}_d) = \Phi_d(\mathbf{t}'_d)$. Again, using the maps $\phi_d$ and $\psi_d$ we have
	\[
		\phi_d\Big(\ \psi_d\big(\Phi_{d-2}(\mathbf{t}_{d-2}),\ t_{d-2}\big),\ t_{d-1}\Big) =
		\phi_d\Big(\ \psi_d\big(\Phi_{d-2}(\mathbf{t}'_{d-2}),\ t'_{d-2}\big),\ t'_{d-1}\Big).
	\]
	Since $\phi_d$ is bijective the above equation simplifies to
	\[
	\bigg(\psi_d\big(\Phi_{d-2}(\mathbf{t}_{d-2}),\ t_{d-2}\big),\ t_{d-1}\bigg) =
	\bigg(\psi_d\big(\Phi_{d-2}(\mathbf{t}'_{d-2}),\ t'_{d-2}\big),\ t'_{d-1}\bigg).
	\]
	So, $t_{d-1} = t'_{d-1}$ and $\psi_d\big(\Phi_{d-2}(\mathbf{t}_{d-2}),\ t_{d-2}\big) = \psi_d\big(\Phi_{d-2}(\mathbf{t}'_{d-2}),\ t'_{d-2}\big)$. From Lemma \ref{lem:BaseBoundaryInjective} we have that $\psi_d$ is generically injective. Therefore we can change $\mathbf{t}_d$ and $\mathbf{t}'_d$ so that $\psi_d$ has a unique inverse, and we have equality on the ordered pair
	\[
		\big(\Phi_{d-2}(\mathbf{t}_{d-2}),\ t_{d-2}\big) =
		\big(\Phi_{d-2}(\mathbf{t}'_{d-2}),\ t'_{d-2}\big).
	\]
	Now we have that $t_{d-2} = t'_{d-2}$ and $\Phi_{d-2}(\mathbf{t}_{d-2}) = \Phi_{d-2}(\mathbf{t}'_{d-2})$. Here the induction hypothesis takes over, and we have generic injectivity.
\end{proof}

\section{Conclusion}
We hope that the results in this paper will help lay the foundation for further parametrizations of semialgebraic sets. Of course the next big hurdle will be copositive polynomials in two variables. However, as we have seen in several algebraic contexts, making the jump to multiple variables tends to require new techniques and ideas. Thus, we expect that the two-variable case will not arise simply from our results. Having said that, one thing to note is that the idea of the base boundary can easily be extended to multiple variables, and we believe that this object could hold an important role in any future parametrizations.

\bigskip \noindent \textbf{Acknowledgements.}
Hoon Hong was partially supported by US National Science Foundation NSF-CCF-2212461.

\bibliographystyle{plain}
\bibliography{biblio}

\end{document}